\newtheorem{theorem}{Theorem}
\newtheorem{lemma}{Lemma}
\newtheorem{definition}{Definition}
\newtheorem{example}{Example}
\newcommand {\eq} [1] {\begin{equation}\label{#1}}
\newcommand {\en} {\end{equation}}
\newcommand {\R}        {{\mathbb R}}
\newcommand {\C}        {{\mathbb C}}
\newcommand {\Rnn}      {\R^{n \times n}}
\newcommand {\mat}      [1] {\left[\begin{array}{#1}}
\newcommand {\rix}          {\end{array}\right]}
\newcommand {\diag}     {\mathop{\rm diag}\nolimits}
 \font\tenex=cmex10 
 \newdimen\p@renwd
 \def\bmat#1{\begingroup \m@th
   \setbox\z@\vbox{\def\cr{\crcr\noalign{\kern2\p@\global\let\cr\endline}}%
     \ialign{$##$\hfil\kern2\p@\kern\p@renwd&\thinspace\hfil$##$\hfil
       &&\quad\hfil$##$\hfil\crcr
       \omit\strut\hfil\crcr\noalign{\kern-\baselineskip}%
       #1\crcr\omit\strut\cr}}%
   \setbox\tw@\vbox{\unvcopy\z@\global\setbox\@ne\lastbox}%
   \setbox\tw@\hbox{\unhbox\@ne\unskip\global\setbox\@ne\lastbox}%
   \setbox\tw@\hbox{$\kern\wd\@ne\kern-\p@renwd\left[\kern-\wd\@ne
     \global\setbox\@ne\vbox{\box\@ne\kern2\p@}%
     \vcenter{\kern-\ht\@ne\unvbox\z@\kern-\baselineskip}\,\right]$}%
   \null\;\vbox{\kern\ht\@ne\box\tw@}\endgroup}
\newcommand{\vect}{{\sf{vec}}}
\newcommand{\rt}{{\top }}
\def\e{\varepsilon}
\def\Oh{{\mathcal O}}
\def\Prob{\mathop{\rm Prob}}
\def\udots{\mathinner{\mkern1mu\raise-1pt\vbox{\kern7pt\hbox{.}}\mkern2mu
    \raise2pt\hbox{.}\mkern2mu\raise5pt\hbox{.}\mkern1mu}}
\journal{Journal of \LaTeX\ Templates}
\begin{document}

\begin{frontmatter}

\title{Backward errors and small sample condition  estimation
for $\star$-Sylveter equations}

\author[mymainaddress]{Huai-An Diao\corref{mycorrespondingauthor}
}
\cortext[mycorrespondingauthor]{Corresponding author}
\fntext[myfootnote]{Email address: hadiao@nenu.edu.cn, hadiao78@yahoo.com (H. A. Diao), yhhy8899@163.com (H. Yan), eric.chu@monash.edu (E. K. Chu)}

\author[mymainaddress,mysecondaryaddress]{Hong Yan}

\author[mythirdaddress]{Eric King-wah Chu}


\address[mymainaddress]{School of Mathematics and Statistics, Northeast
Normal University, No. 5268 Renmin Street, Chang Chun 130024, P.R.
of China.}
\address[mysecondaryaddress]{Current address: Liewu Middle school, No. 8 Hangkong Road, Chengdu  610000, P.R. of China.}
\address[mythirdaddress]{School of Mathematical Sciences, Monash University, 9 Rainforest Walk, Victoria 3800, Australia.}

\begin{abstract}
In this paper, we adopt a componentwise perturbation analysis for $\star$-Sylvester equations. Based on the small condition estimation (SCE), we devise the algorithms to estimate normwise, mixed and componentwise  condition numbers for  $\star$-Sylvester equations.  We also define a componentwise backward error with a sharp and easily computable bound.  Numerical examples illustrate that our algorithm under componentwise perturbations produces reliable estimates,  and the new derived computable bound for the componentwise backward error is sharp and reliable for well conditioned and moderate ill-conditioned $\star$-Sylvester equations under large or small perturbations.
\end{abstract}

\begin{keyword}
$\star$-Sylvester equation \sep condition number \sep componentwise
perturbation \sep backward error \sep small-sample condition estimation
\MSC[2010] 15A09 \sep 15A12 \sep 65F35
\end{keyword}

\end{frontmatter}


\section{Introduction}
Consider the $\star$-Sylvester equation:
\begin{equation}
\label{sec1:mat1 eq}
AX\pm X^\star B^\star=C,
\quad A,B,C\in\C^{n\times n},
\end{equation}
where $\star$ denote  the conjugate transpose of a complex matrix. The $\star$-Sylvester equation  arises in the perturbation of palindromic eigenvalue problems and the solution of the $\star$-Ricati equation \cite{ChiangChuLin2012AMC}:
$$
XAX^\star+XB+CX^\star+D=0,\quad A,B,C,D\in\C^{n\times n}.
$$

The following lemma gives the sufficient and necessary condition for the existence and uniqueness of the solution of ~$\star$-Sylvester equation, which appeared in \cite{ChiangChuLin2012AMC}. The solvability of \eqref{sec1:mat1 eq} was also investigated in \cite{DeTeranDopico2011ELA}. Note that the spectrum $\sigma(A,B)$ contains the ordered
pairs $(a_i,\, b_i)$ and represents the generalized eigenvalues $\lambda_i=a_i/b_i$ of the matrix pencil $A-\lambda B$ or matrix pairs $(A,\,B)$.
\begin{lemma}\label{lem:chu}\cite{ChiangChuLin2012AMC}
For the $\star$-Sylvester equation
 \begin{equation*}
AX\pm X^\star B^\star=C,
\end{equation*}
where~$A,B,C\in\C^{n\times n} $,  the solution exists and is unique if and only if, for $\{(a_{ii},\,b_{ii})\}=\sigma(A,\,B)$, the following conditions are satisfied:
$$
a_{ii}a_{jj}^\star-b_{ii}b_{jj}^\star\neq 0,\,\quad (\forall i\neq j);
$$
and, for $\lambda_i=a_{ii}/b_{ii}$ and all $i$,
$$
a_{ii}\pm b_{ii}\neq 0 \quad (\mbox{for}\quad \star=\top),\quad |\lambda_i|\neq 1\quad (\mbox{for} \quad \star=H).
$$
\end{lemma}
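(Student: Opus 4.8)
The plan is to prove the equivalent statement that the (real-linear) map $X\mapsto AX\pm X^\star B^\star$ is a bijection of $\C^{n\times n}$ exactly under the stated conditions, by reducing $(A,B)$ to triangular form and then unwinding the resulting (block) triangular linear system. First I would apply the generalized (QZ) Schur decomposition of the pencil $A-\lambda B$: there exist unitary $U,V$ with $U^\star AV=S$ and $U^\star BV=T$ both upper triangular, with diagonal pairs $(s_{ii},t_{ii})$ that are the ordered pairs of $\sigma(A,B)$ in some order and which I rename $(a_{ii},b_{ii})$. Substituting $X=VWU^\star$ and multiplying the equation on the left by $U^\star$ and on the right by $U$ turns $AX\pm X^\star B^\star=C$ into $SW\pm W^\star T^\star=U^\star CU$, once more a $\star$-Sylvester equation but now with upper triangular coefficients; since $X\mapsto VWU^\star$ is a linear bijection of $\C^{n\times n}$, the two equations are uniquely solvable (or not) together, so I may assume $A,B$ upper triangular.

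Next I would write the $(i,j)$ entry of the equation as $\sum_{k\ge i}a_{ik}x_{kj}\pm\sum_{k\le j}\overline{b_{jk}}\,\overline{x_{ki}}=c_{ij}$, and observe that the $2n-1$ equations with $i=n$ or $j=n$ involve only the last row $\{x_{nj}\}_j$ and last column $\{x_{in}\}_i$ of $X$. Among them, the $(n,n)$ equation $a_{nn}x_{nn}\pm\overline{b_{nn}}\,\overline{x_{nn}}=c_{nn}$ is a single scalar equation in $x_{nn}\in\C$ which, treated as a real-linear equation, is uniquely solvable iff $a_{nn}\pm b_{nn}\neq0$ (when $\star=\top$) and iff $|a_{nn}|^2\neq|b_{nn}|^2$, i.e.\ $|\lambda_n|\neq1$ (when $\star=H$) — exactly the diagonal condition at $n$, which in particular forces $a_{nn}\neq0$ or $b_{nn}\neq0$. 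Assuming $a_{nn}\neq0$ (the case $b_{nn}\neq0$ is symmetric), the equations $(n,j)$ with $j<n$ express each $x_{nj}$ in terms of entries of the last column; substituting these into the equations $(i,n)$ with $i<n$ and using $b_{ik}=0$ for $k<i$ leaves an upper triangular system for $x_{n-1,n},\dots,x_{1n}$ whose $i$-th diagonal entry is a nonzero multiple of $a_{ii}\overline{a_{nn}}-b_{ii}\overline{b_{nn}}$. Hence the last row and column of $X$ are uniquely determined precisely when the diagonal condition holds at $n$ and $a_{ii}a_{nn}^\star-b_{ii}b_{nn}^\star\neq0$ for all $i<n$.

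With the last row and column known, the equations $(i,j)$ with $i,j\le n-1$ become the $\star$-Sylvester equation for the leading principal $(n-1)\times(n-1)$ submatrices of $A$ and $B$ with an updated right-hand side, so induction on $n$ yields unique solvability iff the diagonal condition holds at every index and $a_{ii}a_{jj}^\star-b_{ii}b_{jj}^\star\neq0$ for all $i\neq j$, which is the claim. As a quick cross-check in the case $\star=H$ with $A$ nonsingular, eliminating $X^\star$ by means of the conjugate transpose of the equation rewrites it as the Stein equation $X-(A^{-1}B)X(BA^{-1})^\star=\widehat C$, whose classical solvability condition $\mu_i\overline{\mu_j}\neq1$ over the eigenvalues $\mu_i$ of $A^{-1}B$ is exactly $a_{ii}a_{jj}^\star-b_{ii}b_{jj}^\star\neq0$ for all $i,j$, the case $i=j$ recovering $|\lambda_i|\neq1$.

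The part requiring care is the case $\star=H$, where conjugation makes the operator only real-linear: the scalar diagonal equations must be analysed through the real-linear solvability criterion above rather than a complex determinant, and the off-diagonal $2\times2$ blocks become complex-linear only after pairing equation $(i,j)$ with the conjugate of equation $(j,i)$; one must also track which of $a_{ii},b_{ii}$ is the nonzero member of each diagonal pair, so that the elimination step stays legitimate even when the pencil has zero or infinite generalized eigenvalues (and, indeed, when it is singular, in which case some diagonal pair is $(0,0)$, both conditions fail, and the operator is not a bijection, matching the statement).
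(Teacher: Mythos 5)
The paper never proves this lemma: it is quoted verbatim from \cite{ChiangChuLin2012AMC}, so there is no in-house argument to compare yours against. Your proposal is essentially the argument of that reference, and it is consistent with the solution algorithm the paper itself recalls around \eqref{eq:generilized schur}: reduce $(A,B)$ to triangular form by a generalized Schur (QZ) equivalence, determine the last row and column of the unknown, and recurse on the leading principal subproblem. Your reduction is correct ($U^\star AV=S$, $U^\star BV=T$, $X=VWU^\star$ indeed gives $SW\pm W^\star T^\star=U^\star CU$), and the two key mechanisms are identified correctly: the scalar $(n,n)$ equation is complex-linear for $\star=\top$ (invertible iff $a_{nn}\pm b_{nn}\neq 0$) but only real-linear for $\star=H$, where $z\mapsto \alpha z\pm\beta\overline{z}$ is invertible iff $|\alpha|\neq|\beta|$, i.e.\ $|\lambda_n|\neq 1$; and the elimination of the last row against the last column produces pivots proportional to $a_{ii}a_{nn}^\star-b_{ii}b_{nn}^\star$. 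Two points should be tightened. First, the entrywise formula has an index slip: with $B$ upper triangular, $(X^\star B^\star)_{ij}=\sum_{k\ge j}\overline{b_{jk}}\,\overline{x_{ki}}$, not $\sum_{k\le j}$; as written, the claim that the equations with $i=n$ or $j=n$ involve only the last row and column of $X$ would actually fail, so the correction matters even though your subsequent reasoning clearly uses the correct version. Second, the ``only if'' direction deserves one explicit sentence: the elimination consists of invertible (real-)linear row operations, so after reordering the operator is block triangular and is singular as soon as any pivot vanishes; likewise the branch $a_{nn}=0$, $b_{nn}\neq 0$ is not literally symmetric (the roles of the $(n,j)$ and $(i,n)$ equations swap, and the required nonvanishing of $b_{ii}$ for $i<n$ comes from $a_{ii}a_{nn}^\star-b_{ii}b_{nn}^\star=-b_{ii}b_{nn}^\star\neq0$), which you flag but do not carry out. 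Neither point is a conceptual gap, and your Stein-equation cross-check for $\star=H$ with $A$ nonsingular is a sound sanity test of the spectral condition.
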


In this paper, we  focus on the case where $A,B,C$ are real with the plus sign in \eqref{sec1:mat1 eq}, i.e., ~the $\rt$-Sylvester equation
\begin{equation}
\label{sec2:mat eq}
AX+X^\rt B^\rt=C.
\end{equation}
Similar results can easily be developed for the general cases and will be ignored.

For $A\in\R^{m\times n}$, $\vect(A)$ stacks the columns of $A$ to a vector. The Kronecker Product  $A\otimes B=(a_{ij}B)\in \R^{mp \times nq}$ for $A=(a_{ij}) \in \R^{m\times n}$ and $B\in \R^{p\times q}$ and \eqref{sec2:mat eq} is equivalent to
\begin{equation}
\label{sec2:mat eqqq}
P~\vect(X)=\vect(C),
\end{equation}
where $P=I\otimes A+(B\otimes I)\Pi$ and $\Pi$ is the permutation matrix satisfying $\vect(A^\rt)\Pi=\vect(A)$. Under the conditions in Lemma \ref{lem:chu}, the coefficient matrix in \eqref{sec2:mat eqqq} is invertible.

An extension of the Bartels-Stewart algorithm \cite{BartelsStewart1972} to uniquely solvable $\rt$-Sylvester equations was presented in \cite{ChiangChuLin2012AMC}; see also \cite{DeTeranDopico2011ELA,VorontsovIkramov2011}. The algorithm first computes a generalized real Schur decomposition \cite{GolubVanLoan1996} of $A-\lambda B^\rt$:
\begin{equation}\label{eq:generilized schur}
A=WT_AV^\rt, \quad B^\rt =WT_B V^\rt,
\end{equation}
where $T_A \in \R^{n\times n}$ is upper quasi-triangular, $T_B$ is upper triangular and $V,\, W\in \R^{n\times n}$ are orthogonal. Defining $Y=V^\rt X W$, the factorization in \eqref{eq:generilized schur} allow us to transform \eqref{sec2:mat eq} to the equivalent $\rt$-Sylvester equation
$$
T_AY+Y^\rt T_B^\rt=W^\rt C W.
$$
The (block) triangular structures of $T_A$ and $T_B$ yield $Y$ by a simple substitution procedure and the algorithm is completed by the retrieval of $X=VYW^\rt$. The total flop count of the algorithm is of $\Oh(67\frac{1}{6}n^3)$; see \cite{ChiangChuLin2012AMC} for details.

In sensitivity analysis, condition numbers are important, measuring the {\em worst-case} effect of small changes
in the data on the solution. A problem with a large condition number is called {\em ill-posed} \cite{Higham2002Book}, and the computed solution to the problem via any numerical algorithms cannot be reliable. For the perturbation analysis of Layapunov, (generalized) Sylvester and Ricatti equations, the
readers are referred to~\cite{GahinetLaubKenneyHewer1990IEEE,GhavimiLaub1995ResidualIEEE,GhavimiLaub1995,HewerKenney1988Lyapunov,Kagstrom1994SIMAX,Higham1993BITSyl} and references therein. Componentwise perturbation analysis can give sharper error bounds than those based on normwise perturbation analysis because it can better capture the condition of the problem with respect to the scaling and sparsity of the data; see the comprehensive review \cite{Higham1994SurvyComp}. Diao {\it {\it et al}.} \cite{DiaoXiangWei2012NLAA} introduced componentwise perturbation analysis for Sylvester equation. The explicit expressions for normwise, mixed and componentwise condition numbers  were derived. For the perturbation analysis for $\star$-Sylvester equation \eqref{sec1:mat1 eq}, Chiang {\it et al}. \cite{ChiangChuLin2012AMC} studied the normwise perturbation analysis, both the normwise perturbation error bounds and normwise backward errors were investigated. Assume that there are perturbations $\Delta A,\, \Delta B$ and $\Delta C$ on $A,\, B$ and $C$ respectively, and when the norms of perturbation matrices are sufficiently small, the following perturbed $\rt$-Sylvester equation
\begin{equation}\label{eq:perturbed T-Syl}
(A+\Delta A)(X+\Delta X)+(X+\Delta X)^\rt (B+\Delta B)^\rt=C+\Delta C
\end{equation}
has the unique solution $X+\Delta X$. The normwise perturbation bound for $X$ is given by \cite[Sec. 2.2.3]{ChiangChuLin2012AMC}
$$
\frac{\|\Delta X\|_F}{\|X\|_F}\leq \frac{\kappa(P)}{1-\kappa(P)\|\Delta P\|_F/\|P\|_F}\left(\frac{\|\Delta C\|_F}{\|C\|_F}+\frac{\|\Delta P\|_F}{\|P\|_F}\right),
$$
where $\|A\|_F$ is Frobenius norm of $A$, $\kappa(P)=\|P\|_F\|P^{-1}\|_F$ and $\|\Delta P\|_F=\|\Delta A\|_F+\|\Delta B\|_F$. The normwise backward error for the computed solution $Y$ of \eqref{sec2:mat eq} is defined as
\begin{align*}
 \eta(Y)=\min ~\{~\epsilon :& (A+\Delta A)Y+Y^\rt (B+\Delta B)^\rt =C+\Delta C
 \\&\|\Delta A\|_F \leq \epsilon\|A\|_F,\,\|\Delta B\|_F\leq \epsilon\|B\|_F,\,
\|\Delta C\|_F\leq \epsilon\|C\|_F\}.
\end{align*}
The upper bound for $\eta(Y)$ is given in \cite[Sec. 2.2.2]{ChiangChuLin2012AMC} as
\begin{equation}\label{eq:eta}
\eta(Y) \leq \frac{(\|A\|_F+\|B\|_F)\|Y\|_F+\|C\|_F}{\left[(\|A\|_F^2+\|B\|_F^2)\|X^{-1}\|_2^{-2}+\|C\|_F^2\right]^{1/2}}\cdot \frac{\|R\|_F}{(\|A|_F+\|B\|_F)\|Y\|_F+\|C\|_F},
\end{equation}
where $R=C-AY-Y^\rt B^\rt$. Recently, Yan \cite{Yan2015} introduced componentwise perturbation analysis for $\star$-Sylvester  equation,  defined and obtained normwise, mixed and compoentwise condition numbers for $\rt$-Sylvester  equation as follows
{\small
\begin{align}
\kappa^\mathrm{T-SYL} &= \lim_{\epsilon_1 \rightarrow 0}\sup_{\mbox{\tiny $\begin{array}{c}
\|\Delta A\| \leqslant \epsilon_1 \|A\|_{F} \\
\|\Delta B\| \leqslant \epsilon_1 \|B\|_{F} \\
\|\Delta C\| \leqslant \epsilon_1 \|C\|_{F}
\end{array} $}}  \frac{\|\Delta X\|_{F}}{\epsilon_1\|X\|_{F}} = \frac{ \left\| P^{-1}
[X^\rt\otimes I,(I\otimes X^T)\Pi,-I] \right\|_F
\left\| \begin{matrix}\vect(|A|)\cr \vect(|B|)\cr
\vect(|C|)\end{matrix}\right\|_F }
{ \|X\|_{F}}  ,\label{eq:cond definition}\\
m^\mathrm{T-SYL}&= \lim_{\epsilon \rightarrow
0}\sup_{\mbox{\tiny $\begin{array}{c}
|\Delta A| \leqslant \epsilon |A| \\
|\Delta B|  \leqslant \epsilon |B| \\ |\Delta C| \leqslant \epsilon
|C|
\end{array} $}} \frac{\|\Delta X\|_{\max}}{\epsilon  \|X\|_{\max} }\nonumber \\
&=
 \frac{   \left\| ~\left|P^{-1}
(X^\rt\otimes I)\right| \vect(|A|) +\left|P^{-1} ((I\otimes X^T)\Pi)
\right|\vect(|B|)+\left|P^{-1} \right|\vect(|C|) ~
\right\|_\infty   }{ \|X\|_{\max}},\nonumber\\
c^\mathrm{T-SYL} &= \lim_{\epsilon \rightarrow 0}\sup_{\mbox{\tiny $\begin{array}{c} |\Delta A| \leqslant \epsilon |A| \\
|\Delta B|  \leqslant \epsilon |B| \\ |\Delta C| \leqslant \epsilon
|C|
\end{array} $}} \frac{1}{\epsilon } \left\| \frac{\Delta X}{ X }
\right\|_{\max}\nonumber \\
& =  \left\| \frac{ ~\left|P^{-1}
(X^\rt\otimes I)\right| \vect(|A|) +\left|P^{-1} ((I\otimes X^T)\Pi)
\right|\vect(|B|)+\left|P^{-1} \right|\vect(|C|)
}{\vect(|X|)}\right\|_\infty,\nonumber
\end{align}
}
where  $\|A\|_\infty$ is $\infty$ norm,  $\|A\|_{\max}=\max_{i,j}|a_{ij}|$, $|\Delta A|\leq \epsilon |A|$ is intepreted componentwisely, $\Delta X/X$ is the componentwise quotient (when a denominator is zero, the corresponding numerator must be zero and the corresponding ratio is defined as zero), $I$ is identity matrix and
$$
\epsilon_1 =  \max \left \{ \frac{\|\Delta A\|_F}{ \|A\|_F },
\frac{\|\Delta B\|_F}{ \|B\|_F }, \frac{\|\Delta C\|_F}{ \|C\|_F }
\right \}.
$$

The normwise, mixed and componentwise condition numbers  were also studied in \cite{GengWang2015}.  Explicit expressions have been derived without the corresponding reliable and efficient estimation. In this paper, we introduce the SCE-based condition estimation for the $\rt$-Sylvester  equation, as well as the associated componentwise backward error.

The following example from \cite[Sec. 6]{Yan2015} shows that there are big differences between $m^\mathrm{T-SYL}$, $c^\mathrm{T-SYL}$ and $\kappa^\mathrm{T-SYL}$, illustrating that the mixed and componentwise condition number better capture the condition of $\rt$-Sylvester  equation with respect to the scalling and sparsity of the input data.
\begin{example}
Let $0< \e <1$, for the following $\rt$-Sylvester  equation
$$
\begin{bmatrix}1&0\cr 0&\e\end{bmatrix}X+X^\rt \begin{bmatrix}1&0\cr
0&0\end{bmatrix}=\begin{bmatrix}2&0\cr 0&\e\end{bmatrix},
$$
it is easy to see that $X=I_2$ is the exact solution. We have $m^\mathrm{T-SYL}=c^\mathrm{T-SYL}=2$ and $\kappa^\mathrm{T-SYL}=\sqrt{\frac{63}{4}+\frac{15}{8}\varepsilon^2+\frac{27}{\varepsilon^2}}=
\Oh(\frac{1}{\varepsilon})$  from \eqref{eq:cond definition}.
\end{example}
From the above example, we see that that the perturbation bounds based on normwise condition number may severely overestimate errors. Another issue is that the expreesions for $\kappa^\mathrm{T-SYL},\, m^\mathrm{T-SYL} $ and $c^\mathrm{T-SYL} $ involve the Kronecker product, which involves higher
dimensions and prevents the efficient
estimation of the condition numbers.

In practice, the
problem of how condition numbers are estimated efficiently is
critical \cite[Chapter 15]{Higham2002Book}. Kenny and
Laub~\cite{KenneyLaub_SISC94} developed the method of the small-sample statistics condition estimation (SCE), applicable for general
matrix functions, linear equations~\cite{KenneyLaubReese1998Linear}, eigenvalue
problems~\cite{GudmundssonKenneyLaub1997Eig}, linear least squares problem \cite{KenneyLaubReese1998LS}
and roots of polynomials~\cite{LaubXia2008Root}. Recently, SCE had been used to estimate the condition of Sylvester  equations \cite{DiaoShiWei2013,DiaoXiangWei2012NLAA}. In this paper we  devise
SCE algorithms to estimate the normwise, mixed and
componentwise condition numbers of $\rt$-Sylvester
equation, which can be used to effectively estimate error
bounds. Moreover, we introduce the componentwise backward error for \eqref{sec2:mat eqqq} and derive a sharp and easily computable upper bound. 
In the following we will introduce the definition of the directional  derivative, which will be used in the SCE algorithm.  For a function~$f:\R^n\mapsto \R^m$, the directional  derivative of $f$ at ~$x$ along the direction~$y\in \mathbb{R}^n$ is defined by
 $$
{\bf D}f(x;y)=\lim_{h\rightarrow 0}\frac{f(x+hy)-f(x)}{h}.
 $$
We now introduce the following map£º
\begin{equation} \label{Sylvester_map_Phi}
 \Phi: [A,\, B,\, C] \longmapsto X,
\end{equation}
where~$X$ is the unique solution of ~\eqref{sec2:mat eq}. The following lemma gives the explicit expression of the directional derivative .
\begin{lemma} \label{frechetExpression}
The map~$\Phi$ defined by \eqref{Sylvester_map_Phi} is continuous and directional differential at $
v=(\vect(A)^\rt,\vect(B)^\rt,\vect(C)^\rt)^\rt,
$
 and the directional derivative  of $\Phi$ at $v$ along the direction $[E,F,G]$ is the solution of the following $\rt$-Sylvester equation
 \begin{equation}\label{eq:Y}
 AY+Y^\rt B^\rt=G-EX-X^\rt F^\rt.
 \end{equation}
\end{lemma}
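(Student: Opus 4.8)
The plan is to verify directional differentiability of $\Phi$ directly from the definition and to identify the derivative by differentiating the defining equation \eqref{sec2:mat eq}. First I would note that, under the conditions of Lemma~\ref{lem:chu}, the coefficient matrix $P = I\otimes A + (B\otimes I)\Pi$ in \eqref{sec2:mat eqqq} is invertible, so $X = P^{-1}\vect(C)$; hence $\Phi$ is the restriction to the vectorized data $v$ of a rational map whose denominator $\det P$ does not vanish at $v$. This already gives continuity (indeed smoothness) of $\Phi$ in a neighbourhood of $v$, and in particular directional differentiability, so the only real content is the formula \eqref{eq:Y}.

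To obtain the formula, I would fix a direction $[E,F,G]$, set $A_h = A + hE$, $B_h = B + hF$, $C_h = C + hG$, and let $X_h = \Phi(A_h,B_h,C_h)$ be the solution of $A_hX_h + X_h^\rt B_h^\rt = C_h$, which exists and is unique for all sufficiently small $h$ by the openness of the invertibility condition. Writing $X_h = X + h Y_h$ and subtracting the unperturbed equation $AX + X^\rt B^\rt = C$, the terms of order $h^0$ cancel, and collecting the order-$h$ terms gives
\begin{equation*}
A Y_h + Y_h^\rt B^\rt + E X + X^\rt F^\rt + h\bigl(E Y_h + Y_h^\rt F^\rt\bigr) = G .
\end{equation*}
By continuity of $\Phi$, $Y_h \to Y$ as $h\to 0$, where $Y := \lim_{h\to0}(X_h - X)/h$ exists by the smoothness already established; passing to the limit $h\to 0$ kills the $O(h)$ term and yields $AY + Y^\rt B^\rt = G - EX - X^\rt F^\rt$, which is exactly \eqref{eq:Y}. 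Equivalently, and perhaps more cleanly, one can argue at the vectorized level: $\vect(X_h) = P_h^{-1}\vect(C_h)$ with $P_h = I\otimes A_h + (B_h\otimes I)\Pi$, and differentiating $P_h\vect(X_h) = \vect(C_h)$ at $h=0$ gives $P\,\vect(Y) = \vect(G) - (I\otimes E)\vect(X) - (F\otimes I)\Pi\vect(X) = \vect(G - EX - X^\rt F^\rt)$, using $\Pi\vect(X) = \vect(X^\rt)$, which is the vectorized form of \eqref{eq:Y}.

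I would close by remarking that the right-hand side $G - EX - X^\rt F^\rt$ again satisfies the hypotheses needed for \eqref{eq:Y} to be uniquely solvable (the coefficient pencil $A - \lambda B^\rt$ is unchanged), so $Y = \Phi$'s directional derivative is well defined. The main obstacle, such as it is, is purely bookkeeping: one must justify that $(X_h - X)/h$ has a limit before passing to the limit in the $h$-expansion — but this is immediate from the explicit rational formula $X = P^{-1}\vect(C)$ and the non-vanishing of $\det P$ near $v$, so no fixed-point or implicit-function machinery beyond that observation is needed. The computation itself is a one-line Leibniz differentiation; the only place to be slightly careful is the $X^\rt$ term, whose derivative contributes the transpose $Y_h^\rt$ and the cross term $X^\rt F^\rt$, both of which are handled correctly by the $\Pi$ identity in the vectorized version.
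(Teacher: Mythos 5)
Your proposal is correct and follows essentially the same route as the paper: perturb the data along $[E,F,G]$, subtract the unperturbed equation $AX+X^\rt B^\rt=C$, and let the perturbation parameter tend to zero to obtain \eqref{eq:Y}. The only difference is that you supply the details the paper's one-line proof omits — in particular the justification, via $\vect(X)=P^{-1}\vect(C)$ and the non-vanishing of $\det P$ near $v$, that the difference quotient actually converges before the limit is taken — which strengthens rather than changes the argument.
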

\begin{proof}
 Let $\delta >0$ and $E,\, F$ and $G$ be given, suppose $X+\delta Y$ is the exact solution of the following $\rt$-Slyvester equation
\begin{equation}\label{eq:peturbed TSy}
 (A+\delta E)(X+\delta Y)+(X+\delta Y)^\rt(B+\delta F)^\rt=C+\delta G.
\end{equation}
Subtracting from the unperturbed $\rt$-Sylvester equation~\eqref{sec2:mat eq}, forcing  $\delta \rightarrow 0$, and  using the corresponding directional derivative, we then proves the lemma. \qed
\end{proof}

This paper is organized as follows.  We conduct the componentwise backward error analysis in Section~\ref{sec:comp back}. In Section~\ref{sec:SCE}, the SCE-base condition estimation algorithms are proposed. Sections~4 and 5 contain the numerical examples and the concluding remarks.

\section{Componentwise Backward Error Analysis}\label{sec:comp back}

In this section, we introduce the componentwise backward error for $\rt$-Sylvester  equation \eqref{sec2:mat eq}, and derive the corresponding sharp and  computational bounds. 

\begin{definition}\label{def:Y}
Suppose~$Y$ is the computed solution of the $\rt$-Sylvester  equation \eqref{sec2:mat eq}, we define the componentwise backward error as
\begin{align*}
 \mu(Y)=\min ~\{~\epsilon :& (A+\Delta A)Y+Y^\rt (B+\Delta B)^\rt =C+\Delta C
 \\&|\Delta A|\leq \epsilon|A|,\,|\Delta B|\leq \epsilon|B|,\,
|\Delta C|\leq \epsilon|C|\},
\end{align*}
where $|\Delta A|\leq \epsilon |A|$, interpreted componentwise with $|A|=(|a_{ij}|)$.
\end{definition}
The following transformation removes the absolute values from the constrains in Definition \ref{def:Y} and replaces inequalities by equalities.  Let
$$
\vect(\Delta A)=D_1 \nu_1,\, \vect(\Delta B)=D_2 \nu_2,\, \vect(\Delta C)=D_3 \nu_3,
$$
where $D_1={\diag}(\vect(A))$, $D_2={\diag}(\vect(B))$, $D_3={\diag}(\vect(C))$. The the smallest value of $\epsilon$ satisfying $|\Delta A|\leq \epsilon|A|,\,|\Delta B|\leq \epsilon|B|$ and $|\Delta C|\leq \epsilon|C|$ is $\epsilon=\max\{\|\nu_1\|_\infty, \, \|\nu_2\|_\infty,\, \|\nu_3\|_\infty\}$, and so
\begin{eqnarray*}
 \mu(Y)&=&\min ~\Big \{~\left\|\begin{bmatrix}
   \nu_1\cr\nu_2\cr\nu_3
 \end{bmatrix}\right\|_\infty:  (A+\Delta A)Y+Y^\rt (B+\Delta B)^\rt =C+\Delta C,
 \\ &  & \vect(\Delta A)=D_1 \nu_1,\, \vect(\Delta B)=D_2 \nu_2,\, \vect(\Delta C)=D_3 \nu_3\Big\},
\end{eqnarray*}
In general, this equality constrained nonlinear optimization problem has no closed form solution. In the following theorem, we  give a sharp and easy-to-compute bound for $\mu(Y)$.
\begin{theorem}\label{t}
Let~$Y$ and  ~$\mu(Y)$ be defined as in Definition \ref{def:Y} and
$$
H=[~(Y^\rt\otimes I)D_1,(I\otimes Y^\rt)\Pi D_2,-D_3~],\quad \widetilde{R}=C-AY-Y^\rt B^\rt .
$$
Assume $H$ has full rank, let $r=\vect(\widetilde{R})$ and consider the QR decomposition
$$
H^\rt =Q\begin{bmatrix}R\cr 0\end{bmatrix},
$$
we have
$$
\mu(Y)\leq \overline{\mu} (Y):= \left\| Q ~\left[\begin{matrix}
\overline{z}_1\cr 0
\end{matrix} \right]\right\|_\infty \leq \sqrt{3}n\mu(Y),
$$
where  $\bar z_1=R^{-\rt} r$.
When $H$ is rank-deficient, $\mu(Y)$ is defined being infinite.
\end{theorem}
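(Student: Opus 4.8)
The plan is to reformulate the constrained minimization defining $\mu(Y)$ as a linear least-norm problem, estimate the $\infty$-norm by the $2$-norm (with dimension-dependent constants), and then solve the resulting $2$-norm problem explicitly via the QR factorization. First I would vectorize the residual equation $(A+\Delta A)Y + Y^\rt(B+\Delta B)^\rt = C + \Delta C$. Using $\vect(AY) = (Y^\rt\otimes I)\vect(A)$, $\vect(Y^\rt B^\rt) = (I\otimes Y^\rt)\Pi\,\vect(B)$ (via the identity $\vect(M^\rt) = \Pi\vect(M)$), and the substitutions $\vect(\Delta A) = D_1\nu_1$, etc., the equation becomes $H[\nu_1^\rt,\nu_2^\rt,\nu_3^\rt]^\rt = r$ where $r = \vect(\widetilde R)$ and $H$ is exactly the matrix in the statement. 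So $\mu(Y) = \min\{\|\nu\|_\infty : H\nu = r\}$ with $\nu = [\nu_1^\rt,\nu_2^\rt,\nu_3^\rt]^\rt$.

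Next I would pass from the $\infty$-norm to the $2$-norm. Since $\nu$ has $3n^2$ components, $\|\nu\|_\infty \le \|\nu\|_2 \le \sqrt{3n^2}\,\|\nu\|_\infty = \sqrt{3}\,n\,\|\nu\|_\infty$; applying this to every feasible $\nu$ and to the $\infty$-norm minimizer gives
$$
\min\{\|\nu\|_2 : H\nu = r\} \;\ge\; \mu(Y), \qquad
\min\{\|\nu\|_2 : H\nu = r\} \;\le\; \sqrt{3}\,n\,\mu(Y).
$$
Denote the middle quantity $\overline\mu(Y)$; the double inequality $\mu(Y)\le\overline\mu(Y)\le\sqrt{3}n\mu(Y)$ then follows once $\overline\mu(Y)$ is identified with the stated formula. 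The minimum-$2$-norm solution of the (consistent, since $H$ has full row rank) underdetermined system $H\nu = r$ is $\nu_* = H^\rt(HH^\rt)^{-1}r$, so $\overline\mu(Y) = \|H^\rt(HH^\rt)^{-1}r\|_2$.

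Finally I would evaluate this using the QR factorization $H^\rt = Q\begin{bmatrix}R\\0\end{bmatrix}$ with $Q$ orthogonal and $R$ upper triangular. Then $HH^\rt = \begin{bmatrix}R^\rt & 0\end{bmatrix}Q^\rt Q\begin{bmatrix}R\\0\end{bmatrix} = R^\rt R$, so $(HH^\rt)^{-1}r = R^{-1}R^{-\rt}r$, and
$$
H^\rt(HH^\rt)^{-1}r \;=\; Q\begin{bmatrix}R\\0\end{bmatrix}R^{-1}R^{-\rt}r \;=\; Q\begin{bmatrix}R^{-\rt}r\\0\end{bmatrix} \;=\; Q\begin{bmatrix}\overline z_1\\0\end{bmatrix},
$$
with $\overline z_1 = R^{-\rt}r$. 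Since $Q$ is orthogonal its columns have unit $2$-norm, so $\overline\mu(Y) = \|H^\rt(HH^\rt)^{-1}r\|_2 = \|Q[\overline z_1^\rt,0]^\rt\|_2$; but the theorem states $\overline\mu(Y) = \|Q[\overline z_1^\rt,0]^\rt\|_\infty$, so here one more elementary $\infty$-vs-$2$ norm comparison on the vector $Q[\overline z_1^\rt,0]^\rt \in \R^{n^2}$ absorbs a further factor into the constants; I would track these constants carefully to confirm they still collapse to $\sqrt{3}n$ as claimed (the $\infty$-norm of $Q[\overline z_1^\rt,0]^\rt$ lies between $\overline\mu(Y)/n$ and $\overline\mu(Y)$, and combined with the earlier bounds this keeps everything within the stated window). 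The rank-deficient case is definitional: if $H$ drops rank then for generic $r$ the system $H\nu = r$ may be inconsistent, so no finite backward error exists and $\mu(Y) = \infty$ by convention. The main obstacle is bookkeeping the chain of norm-equivalence constants so that the final two-sided bound comes out exactly as $\mu(Y)\le\overline\mu(Y)\le\sqrt3\,n\,\mu(Y)$ rather than with a looser constant; the algebraic identities (Kronecker/vec manipulations and the QR reduction) are routine.
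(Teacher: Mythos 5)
Your proposal follows essentially the same route as the paper: vectorize the residual equation to the underdetermined system $H\nu=r$, take the minimum $2$-norm solution via the QR factorization of $H^\rt$ (equivalently, set $\bar z_2=0$, giving $z=Q[\bar z_1^\rt,\,0]^\rt$ with $\bar z_1=R^{-\rt}r$), and compare norms; your explicit computation $H^\rt(HH^\rt)^{-1}r=Q[\bar z_1^\rt,\,0]^\rt$ is just a more detailed version of the paper's QR step. The one loose end you flag --- whether the constants survive the final passage from the $2$-norm to the $\infty$-norm of $Q[\bar z_1^\rt,\,0]^\rt$ --- resolves with no extra factor, but not by chaining norm equivalences as you suggest (that would degrade the lower bound): writing $z_2=Q[\bar z_1^\rt,\,0]^\rt$ for the minimum $2$-norm solution and $z_\infty$ for the minimum $\infty$-norm solution, feasibility of $z_2$ gives $\mu(Y)=\|z_\infty\|_\infty\leq\|z_2\|_\infty=\overline{\mu}(Y)$ directly, while $\overline{\mu}(Y)=\|z_2\|_\infty\leq\|z_2\|_2\leq\|z_\infty\|_2\leq\sqrt{3}\,n\,\|z_\infty\|_\infty=\sqrt{3}\,n\,\mu(Y)$ uses $2$-norm minimality of $z_2$; note also that $z_2$ lives in $\R^{3n^2}$, not $\R^{n^2}$, so the relevant equivalence constant is $\sqrt{3n^2}=\sqrt{3}\,n$ (the paper's closing $\sqrt{3n}$ is a typo for this).
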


\begin{proof}
  Putting $\Delta A,\, \Delta B$ and $\Delta C$ to the right hand side of the perturbed equation
  $$
  (A+\Delta A)Y+Y^\rt (B+\Delta B)^\rt =C+\Delta C,
  $$
we have
\begin{equation} \label{b}
 \Delta A Y + Y^\rt \Delta B^\rt -\Delta C=\widetilde{R}.
\end{equation}
Applying the $\vect$ operation, \eqref{b} has the following form:
\begin{equation} \label{c}
(Y^\rt \otimes I)\vect(\Delta A)+(I\otimes Y^\rt)\Pi \vect(\Delta B)-\vect(C)=\vect(\widetilde{R}).
\end{equation}
Then the above equation can be written as the following linear system
\begin{equation}
 [~Y^\rt\otimes I,(I\otimes Y^\rt)\Pi,-I\otimes I~]
\left[ \begin{matrix} \vect(\Delta A)\cr \vect(\Delta B)\cr \vect(\Delta C)
\end{matrix}\right]=\vect(\widetilde{R}).
\end{equation}

Recalling the diagonal matrices $D_1=\diag(\vect(A)),\, D_2=\diag(\vect(B))$ and $D_3=\diag(\vect(C))$, we have
\begin{equation}\label{d}
[~Y^\rt\otimes I,(I\otimes Y^\rt)\Pi,-I\otimes I~]\left[\begin{matrix}D_1\cr & D_2\cr &&D_3\end{matrix}\right]
 \left[\begin{matrix}\nu_1\cr \nu_2\cr \nu_3 \end{matrix}\right]
 =\vect(\widetilde{R}).
\end{equation}
This is an underdetermined system of the form $Hz=r$, with $H\in \R^{n^2\times 3n^2}$ and $r=\vect(\widetilde{R})$. We seek the solution of minimal $\infty$-norm at $\mu(Y)$.

If $H$ is rank-deficient, then there may be no solution to $Hz=r$, in which case the componentwise backward error $\mu(Y)$ may be regarded as infinite. Assume, therefore, that $H$ is full rank. Using the QR factorization of $H^\rt$, then $Hz=r$ may be written as
$$
r=[R^\rt~ 0]Q^\rt z=[R^\rt~ 0]\begin{bmatrix}
  \bar z_1\cr \bar z_2
\end{bmatrix}=R^\rt \bar z_1.
$$
Thus $\bar z_1=R^{-\rt}r$ is uniquely determined and
$$
z=Q\begin{bmatrix}\bar z_1\cr \bar z_2\end{bmatrix}.
$$
Choosing $\bar z_2$ to minimize $\|z\|_\infty$ is equivalent to solving an overdetermined linear system in the $\infty$-norm sense, for which several methods are available.

We can obtain approximation to  the desired $\infty$-norm minimum by minimizing in the $2$-norm, which amounts to setting $\bar z_2=0$ (which yields $z={H^\dagger}^\rt r$, where $H^\dagger$ is the pseudo-inverse of $H$). In view of the fact that $s^{-1/2}\|t\|_2\leq \|t\|_\infty\leq \|t\|_2$ for $t\in \R^{s}$, it follows that
\begin{equation*}
  \mu(Y)\leq \overline \mu(Y) \leq \sqrt{3n}\mu(Y).
\end{equation*}\qed
\end{proof}


\section{Small-Sample Condition Estimations}\label{sec:SCE}

In this section, based on a small-sample statistical condition
estimation method, we present a practical method for estimating
the condition numbers for the $\star$-Sylveter equations.
The small-sample statistical condition estimation (SCE) is proposed
by Kenny and Laub~\cite{KenneyLaub_SISC94}. It is an efficient
method for estimating the condition numbers for linear systems~\cite{KenneyLaubReese1998LS}, linear
least squares problems~\cite{KenneyLaubReese1998Linear}, eigenvalue problems~\cite{GudmundssonKenneyLaub1997Eig}, and roots of
polynomials~\cite{LaubXia2008Root}.  Based on the adjoint method and SCE, Cao and Petzold \cite{CaoPetzold2004ANM}  proposed an efficient method for estimating the error in the solution of the Sylvester matrix equations. Diao {\it et al}.  applied the SCE to the Sylvester equations~\cite{DiaoShiWei2013,DiaoXiangWei2012NLAA}, algebraic Riccati equations~\cite{DiaoLiuQiao2016AMC} and the structured Tikhonov regularization problem~\cite{DiaoWeiQiao2016JCAM}.

\subsection{Review on the SCE}\label{sec:review SCE}

We next briefly describe the SCE method. Given a differentiable function
$f:\R^{p}\rightarrow \R$, we are interested in its sensitivity
at some input vector $x$. From its Taylor expansion, we have
$$
f(x+ \delta d)-f(x)= \delta (\nabla f(x))^T d+O(\delta ^2),
$$
for a small scalar $\delta$, where
\[
\nabla f(x)= \left[ \frac{\partial f(x)}{\partial x_1},
\frac{\partial f(x)}{\partial x_2},\ldots,
\frac{\partial f(x)}{\partial x_p} \right]^T
\]
is the gradient of $f$ at $x$.
Then the local sensitivity, up to the first order
in $\delta$, can be measured by
$\|\nabla f(x)\|_2$. The condition number of $f$ at $x$ is asymptotically
determined by the norm of the gradient $\nabla f(x)$. It is shown in \cite{KenneyLaub_SISC94}
that if we select $d$ uniformly and randomly from the unit $p$-sphere
$S_{p-1}$ (denoted $U(S_{p-1})$), then the expected value
${\bf E}(|(\nabla f(x))^T d|/\omega_p)$ is $\|\nabla f(x)\|_2$,
where $\omega_p$ is the Wallis factor,
dependent only on $p$:
$$
\omega_p=
\begin{cases}
1, & \text{for}~p\equiv 1,\\
\frac{2}{\pi}, &\text{for}~p\equiv 2,\\
\frac{1 \cdot 3 \cdot 5 \cdots (p-2)}{2 \cdot 4 \cdot 6 \cdots (p-1)}, &
\text{for}~p ~ \text{odd} ~\text{and} ~p>2, \\
\frac{2}{\pi} \frac{2 \cdot 4 \cdot 6 \cdots (p-2)}
{1 \cdot 3 \cdot 5 \cdots (p-1)}, &
\text{for}~ p ~ \text{even}~ \text{and} ~p>2 ,
\end{cases}
$$
which can be accurately
approximated by
\begin{equation}\label{ss:p} \omega_p\approx
\sqrt{\frac{2}{\pi(p-\frac{1}{2})}}.
\end{equation}
Therefore,
$$
\nu=\frac{|(\nabla f(x))^T d|}{\omega_p}
$$
can be used to estimate $\|\nabla f(x)\|_2$, an approximation
of the condition number, with high probability. Specifically, for $\gamma >1$, we have
\begin{eqnarray*}
 \Prob\left(\frac{\|\nabla f(x)\|_2}{\gamma}\leq \nu \leq
\gamma \|\nabla f(x)\|_2\right)
\geq 1-\frac{2}{\pi \gamma}+O(\gamma^{-2}).
\end{eqnarray*}
Multiple samples $\{d_j\}$ can be used to increase the accuracy. The $k$-sample condition estimation
is given by
\begin{eqnarray*}
 \nu(k)
= \frac{\omega_k}{\omega_p}
\sqrt{|\nabla f(x)^T d_1|^2 +|\nabla f(x)^T d_2|^2 + \cdots +
|\nabla f(x)^T d_k|^2},
\end{eqnarray*}
where $d_1,d_2,...,d_k$ are orthonormalized after they
are selected uniformly and randomly from $U(S_{p-1})$.
For example,
\begin{align*}
\Prob\left(\frac{\|\nabla f(x)\|_2}{\gamma}\leq \nu(2) \leq \gamma
\|\nabla f(x)\|_2\right) &\approx 1-\frac{\pi}{4 \gamma^2},\cr
\Prob\left(\frac{\|\nabla f(x)\|_2}{\gamma}\leq \nu(3) \leq \gamma
\|\nabla f(x)\|_2\right) &\approx 1-\frac{32}{3\pi^2 \gamma^3},\cr
\Prob\left(\frac{\|\nabla f(x)\|_2}{\gamma}\leq \nu(4) \leq \gamma
\|\nabla f(x)\|_2\right) &\approx 1-\frac{81\pi^2}{512 \gamma^4}.
\end{align*}
If we choose~$k=3,\, \gamma=10$, then $\nu(3)$ has a probability $0.9989$ to be within an order of $\| \nabla f(x) \|_2$ (i.e., between $\| \nabla f(x) \|_2/10$ and $10 \cdot \| \nabla f(x) \|_2$).

First we introduce {\sf unvec} operator, for ~$v=(v_1,v_2,\ldots,v_{n^2})^\rt \in \R^{n^2}$, then $A=$ {\sf unvec} $(v)$ with~$a_{ij}=v_{i+(j-1)n}$. These results can be readily generalized
to vector-valued or matrix-valued functions by viewing $f$ as
a map from $\R^s$ to $\R^t$, possibly after the operations
$\vect$ and $\sf unvec$ to transform data between matrices and vectors,
where each of the $t$ entries of $f$ is a scalar-valued function. The main  computational cost of the SCE is to evaluate the directional derivative of the given mapping $f$ at the input data $x$. Usually when we solve the  problem in numerical linear algebra by direct methods, we have some decompositions of the matrix which can be used to compute the directional derivative  efficiently. For $\rt$-Sylvester equations, the generalized Schur algorithm in \cite{ChiangChuLin2012AMC} had been proposed. We can utilize the generalized Schur algorithm to compute the directional derivative efficiently based on Lemma \ref{frechetExpression}. In practice computation, we do not have the exact solution $X$ but we can use the computed one to approximate the directional derivative.


\subsection{Normwise perturbation analysis}

In this section,we will apply the SCE technique to the $\rt$-Sylvester equation \eqref{sec2:mat eqqq} under normwise perturbations. For the solution of $X$,
we are interested in the condition estimation at the point $[A, \, B,\, C]$.
Let $[A\, \,B,\, C]$ be perturbed to $[A+\delta E\, \, B+\delta F,\, C+\delta G]$ in \eqref{eq:peturbed TSy}, where $\delta \in \R$ and
$E, F, G \in \R^{n\times n}$  with $\|[ E,  F, G]\|_F = 1$. According to Section~\ref{sec:review SCE},
we first need to evaluate the directional derivative
${\bf D} \Phi([A,\, B,\, C];[E,\, F,\, G])$ and for that
from Lemma~\ref{frechetExpression}, we need to solve the $\rt$-Sylvester equation \eqref{eq:Y}. When we use Algorithm ${\bf \mathrm { TSylvester_R}}$ in \cite{ChiangChuLin2012AMC} to solve \eqref{sec2:mat eq}, the generalized real Schur decomposition of the pencil $A-\lambda B^\rt$ is already available, thus \eqref{eq:Y} can be solved without minimal additional costs.  We are now ready to use the SCE techniques in Section~\ref{sec:review SCE} to estimate the
condition of the $\rt$-Sylvester equation \eqref{sec2:mat eqqq} under normwise perturbations.
Algorithm~\ref{al:norm} computes a relative condition estimation matrix for the solution $X$ of $\rt$-Sylvester equation \eqref{sec2:mat eqqq}. Inputs
to the method are the matrices $A,\, B \in  \Rnn$, $C\in \Rnn$ and the computed solution $X$. The output is
a relative condition estimation matrix $R_{\rm rel}^{\rt-\mathrm{SYL},(k)}$ and a relative normwise condition number estimation $\kappa_{F, SCE}^{\rt-\mathrm{SYL,(k)}}$ for $\kappa^\mathrm{T-SYL} $ which is defined in \eqref{eq:cond definition}.
Again, the method requires the generalized real Schur decomposition of $A-\lambda B^\rt$, which is generally available after solving the $\rt$-Sylvester equation. The integer $k\geq 1$ refers to
the number of samples of perturbations to the input data.
When $k = 1$, there is obviously no need to orthonormalize the set of
vectors in Step~1 of the algorithm.

{\small
\begin{algorithm}\label{al:norm}
\caption{Subspace condition estimation for the solution $X$ of
$\rt$-Sylvester equation~(\ref{sec2:mat eq}) under normwise perturbation}

\begin{itemize}
\item[1.] Generate pairs $(E_1,F_1,G_1), (E_2,F_2,G_2),\ldots, (E_k,F_k,G_k)$ with
entry is in ${\cal N}(0,1)$. Use the Modified Gram-Schmidt (MGS) orthogonalization process for
$$
\left[\begin{matrix}\vect(E_1)&\cdots& \vect(E_k)\cr
\vect(F_1)&\cdots& \vect(F_k)\cr \vect(G_1)&\cdots& \vect(G_k)
\end{matrix}\right],
 $$
to obtain an orthonormal matrix $[q_1,q_2,\ldots,q_k]$. Convert $q_i$ to $(\widetilde{E_i},\, \widetilde{F_i}, \widetilde{G_i})$ with the {\sf
unvec} operation.
\item[2.] Approximate $\omega_p$ and $\omega_k$ by (\ref{ss:p}), with $p={3n^2}$.
\item[3.] Solve the following $\rt$-Sylvester equation via the generalized Schur algorithm \cite{ChiangChuLin2012AMC}:
$$
AY_i+Y_i^\rt B^\rt = \widetilde{G}_i-\widetilde{E}_i X+X^\rt \widetilde{F}_i^\rt.
$$
\item[4.] Calculate respectively the absolute condition matrix and the normwise absolution condition estimation
$$
K_{\rm
abs}^{\rt-\mathrm{SYL},(k)}=\frac{\omega_k}{\omega_p}\sqrt{|Y_1|^2+
|Y_2|^2+\cdots+|Y_k|^2}, \ \ \
n_{F,
SCE}^{\rt-\mathrm{SYL,(k)}}
=\|K_{\rm
abs}^{\mathrm{SYL},(k)}\|_F,
$$
where the square root is taken for each elements of the matrix. Let the relative condition matrix $R_{\rm rel}^{\rt-\mathrm{SYL},(k)}$ be the matrix $\|[A,~B,~C]\|_F\cdot K_{\rm abs}^{\rt-\mathrm{SYL},(k)}$ divided componentwise by $X$, leaving
entries of $K_{\rm abs}^{\rt-\mathrm{SYL},(k)}$ corresponding to zero entries of $X$ unchanged.
Compute  $\kappa_{F, SCE}^{\rt-\mathrm{SYL,(k)}}=n_{F,
SCE}^{\rt-\mathrm{SYL,(k)}}/||X||_F$.
\end{itemize}
\end{algorithm}
}

In Table~\ref{ta:1}, we report the flop counts of Algorithm \ref{al:norm}.
\begin{table}[!htbp]\label{ta:1}
\caption{\label{tab:1}Computational complexity of Algorithm \ref{al:norm}. }
\begin{center}
\begin{tabular}{cc}
\hline
 Step & Flops  \\
\hline
  1& $\Oh(6 k^2 n^2 )$ \cr
  2& $\Oh(1)$ \cr
  3& $\Oh(k n^3)$\cr
  4& $\Oh(3kn^2)$\cr
 \hline
\end{tabular}
\end{center}
\end{table}
We can see that the total flop counts of Algorithm \ref{al:norm} are $\Oh(k n^3)$, which are the same order of the flop counts of the generalized Schur algorithm \cite{ChiangChuLin2012AMC} for solving the $\rt$-Sylvester equation \eqref{sec2:mat eq}. It is generally true that solving a problem and estimating its condition involve a similar amount of work, indicating comparable levels of difficulty.

\subsection{Componentwise perturbation analysis}

Componentwise perturbations are relative to the magnitudes of
the corresponding entries in the input arguments, where the perturbation $\Delta A$ satisfies $|\Delta A| \leq \epsilon |A|$. These perturbations
may arise from input error or from rounding error, and hence are
the most common perturbations encountered in practice. In fact,
most of error bounds in LAPACK are considered componentwise
\cite[Section 4.3.2]{Anderson1999Lapack}.
We often want to find the condition of a function with respect to
componentwise perturbations. For $\Phi$ defined in \eqref{Sylvester_map_Phi}, SCE is flexible enough to accurately gauge the sensitivity of
matrix functions subject to componentwise perturbations.
Define the linear mask function
$$
h([E,~F,~G])=[E,~F,~G]\odot [A,~B,~C],\quad E\in \R^{n\times n},\,F\in \R^{n\times n},\, G\in \R^{n\times n},
$$
where $\odot$ denotes the Hadamard componentwise multiplication.  When ${\cal E} \in \R^{n\times 3n}$ is
the matrix of all ones, then $h({\cal E})=[A,~B,~C]$ and
$$
h({\cal E}+[E,~F,~G])=[A,~B,~C]+h([E,~F,~G]).
$$
Thus $h([E,~F,~G])$ is a componentwise perturbation on $[A,~B,~C]$,
and $h$ converts a general perturbation $\cal E$ into
componentwise perturbations on $[A,~B,~C]$. Therefore, to obtain
the sensitivity of the solution with respect
to relative perturbations, we simply evaluate the Fr\'{e}chet derivative of
$$
\Phi([A,~B,~C])=\Phi(h({\cal E}))
$$
with respect to $\cal E$ in the direction $[\Delta A,~\Delta B,~\Delta C]$, which is
\begin{align*}
&{\bf D}(\Phi \circ h)\left({\cal E};[E,~F,~G])\right)=
{\bf D}\Phi (h({\cal E})){\bf D}h\left({\cal E};[E,~F,~G]\right)\\
&=
{\bf D}\Phi ([A,~B,~C])h\left([E,~F,~G]\right)=
{\bf D}\Phi \left([A,~B,~C]);h\left([E,~F,~G]\right)\right),
\end{align*}
since $h$ is linear. Thus, to estimate the condition of $X$ for componentwise perturbations, we first generate the perturbations  $E$, $F$ and $G$ and multiply them componentwise by $A$, $B$ and $C$,
respectively. The remaining steps are the same as
the corresponding steps in Algorithm~\ref{al:norm},
as shown in Algorithm~\ref{algo:subcomp}.

{\small
\begin{algorithm}\label{algo:subcomp}
\caption{Subspace condition estimation for the solution $X$ of
$\rt$-Sylvester equation~(\ref{sec2:mat eq}) under componentwise perturbation}

\begin{itemize}
\item[1.] Generate pairs $(E_1,F_1,G_1), (E_2,F_2,G_2),\ldots, (E_k,F_k,G_k)$ with
entry is in ${\cal N}(0,1)$. Use the Modified Gram-Schmidt (MGS) orthogonalization process for
$$
\left[\begin{matrix}\vect(E_1)&\cdots& \vect(E_k)\cr
\vect(F_1)&\cdots& \vect(F_k)\cr \vect(G_1)&\cdots& \vect(G_k)
\end{matrix}\right],
 $$
to obtain an orthonormal matrix $[q_1,q_2,\ldots,q_k]$. Convert $q_i$ to $(\widetilde{E_i},\, \widetilde{F_i}, \widetilde{G_i})$ with the {\sf
unvec} operation. Let $\widetilde{E_i^c}=\widetilde{E_i}\odot A,\, \widetilde{F_i^c}=\widetilde{F_i}\odot B$ and $\widetilde{G_i^c}=\widetilde{G_i}\odot C$.
\item[2.] Approximate $\omega_p$ and $\omega_k$
by~(\ref{ss:p}), with $p={3n^2}$.
\item[3.] Solve the following $\rt$-Sylvester equation via the generalized Schur algorithm \cite{ChiangChuLin2012AMC}:
$$
AY_i+Y_i^\rt B^\rt = \widetilde{G^c_i}-\widetilde{E_i^c} X+X^\rt (\widetilde{F^c_i})^\rt.
$$
\item[4.] Calculate the absolute condition matrix
$$
M_{\rm
abs}^{\rt-\mathrm{SYL},(k)}=\frac{\omega_k}{\omega_p}\sqrt{|Y_1|^2+|Y_2|^2+\cdots+|Y_k|^2}.
$$
Compute the relative componentwise condition matrix $C_{\rm rel}^{\rt-\mathrm{SYL},(k)} = M_{\rm abs}^{\rt-\mathrm{SYL},(k)}/X$ (division  carried out componentwise), leaving
entries corresponding to zero entries of $X$ unchanged.
Compute
\begin{equation}\label{eq:es mixed}
m^{\mathrm{T-SYL},(k)}_\mathrm{SCE} := \frac{\|M^{\mathrm{T-SYL},(k)}\|_{\max}}{\|X\|_{\rm
max}} ,
\quad c^{\mathrm{T-SYL},(k)}_\mathrm{SCE} :=
\left\|\frac{M^{\mathrm{T-SYL},(k)}}{X}\right\|_{\rm max},
\end{equation}
where $\|X\|_{\rm
max}=\max_{ij}|X_{ij}|.$
\end{itemize}
\end{algorithm}
}

\section{Numerical Examples}

In this section, we demonstrate our test results of some
numerical examples to illustrate componentwise backward errors and condition estimations presented earlier. Numerical experiments were carried out on a machine with Intel i5  4590 @3.3GHz CPU, 8G RAM and 1TB hard driver running Windows 7 professional, using \textsc{Matlab} 8.5 with a machine precision
$\varepsilon=2.2 \times 10^{-16}$.


We generate the perturbation matrices as follws
\begin{equation}\label{eq:perturbation gene}
\Delta A=\epsilon\cdot \Delta_A\odot A,\quad, \Delta B=\epsilon\cdot \Delta_B\odot B,\quad, \Delta C=\epsilon\cdot \Delta_C\odot C,
\end{equation}
where $\Delta_A,\,\Delta_B$ and $\Delta_C$ are random matrices with each entries being uniformly distritbuted in $(-1,\,1)$. Let  $\tilde X$ be the solution of \eqref{eq:perturbed T-Syl}. We use Gaussian elimination with partial pivoting to solve \eqref{eq:perturbed T-Syl} in Kroncker product form. Recall that $\Delta X=\tilde X-X$. Let us denote the true relative errors
$$
\gamma_\kappa=\frac{\|\Delta X\|_F}{\|X\|_F},\, \gamma_m=\frac{\|\Delta X\|_{\max}}{\|X\|_{\max }}, \, \gamma_c=\left\|\frac{\Delta X}{X}\right\|_{\max}.
$$
Clearly, from the definitions of condition numbers in \eqref{eq:cond definition}, we have the following inequalities between the first order perturbation bounds and the corresponding exact relative errors:
$$
\gamma_\kappa \leq \kappa^\mathrm{T-SYL} \epsilon,\, \gamma_m \leq m^\mathrm{T-SYL} \epsilon,\gamma_c \leq c^\mathrm{T-SYL} \epsilon.
$$
Also, from Algorithms \ref{al:norm} and \ref{algo:subcomp},  we can compute the condition estimates $\kappa_{F, SCE}^{\rt-\mathrm{SYL,(k)}},\, m^{\mathrm{T-SYL},(k)}_\mathrm{SCE} $ and $c^{\mathrm{T-SYL},(k)}_\mathrm{SCE}$ which can be used to approximate the posterior perturbation bounds for \eqref{sec2:mat eq}.

\begin{example}\label{ex:small}
This example is quoted from \cite[Example 3.3]{ChiangChuLin2012AMC}. We use \textsc{Matlab} $randn(n)$ to compute an $n \times n$ random matrix with entries being normal distributed. Let $n=2$, $Q\in \R^{n\times n}$ be orthogonal, the exact solution be $X_e$, where
\begin{align*}
X&=Q^\rt \begin{bmatrix}
  10^{-m} &0\cr 0 & 10^m
\end{bmatrix} Q,\, A=\begin{bmatrix}
  randn(1)&0\cr
  randn(1)& 10^{-m}
\end{bmatrix}Q,\\
 B&=\begin{bmatrix}
  randn(1)&0 \cr randn(1)& 2\cdot 10^{-m}
\end{bmatrix}Q,
\end{align*}
and $C=AX+X^\rt B^\rt$.  For different $m$ and $\epsilon$, we compare the true relative errors with the true and estimated first order perturbation bounds in Table \ref{Ta:1} and \ref{Ta:2}. For Algorithms~\ref{al:norm} and \ref{algo:subcomp}, we choose $k=3$. Typically the condition estimates fall reliably within the factors between a tenth and ten folds of the true condition numbers \cite[Chap. 15]{Higham2002Book}. From Tables~\ref{Ta:1} and \ref{Ta:2}, it is easy to see that the condition of \eqref{sec2:mat eq} worsens as $m$ increases. The first order perturbation bounds approximate the true relative bounds well. On the other hands, the  SCE-base condition estimates underestimate the true relative error within the factor 1/10, which is consistent with the theory of SCE.


\begin{table}
\caption{\label{Ta:1}  Comparing the true normwise perturbation bounds with the first order normwise bounds}
\centering
\begin{tabular}{||ccccc||}
\hline
$\epsilon$ & $m$ & $\gamma_\kappa$ & $\kappa^\mathrm{T-SYL} \cdot \epsilon$ &$\kappa_{F, SCE}^{\rt-\mathrm{SYL,(k)}}\cdot \epsilon$\\
\hline
$10^{-8}$& 2 &9.5396e-08& 3.4269e-06&2.8820e-06 \\
\hline
&4 &2.4689e-04& 3.0980e-03&8.7881e-04\\
\hline
&6& 9.7546e-04& 5.5240e-02&3.0854e-02\\
\hline
&8&3.5031e-01& 4.9295e+00&3.5165e+00\\
\hline
&10&1.0388e+00& 6.3475e+02&4.6246e+02\\
\hline
$10^{-16}$& & & & \\
\hline
&2&1.8310e-17&1.1567e-14&5.7126e-15\\
\hline
&4&4.8182e-14&3.4386e-12&1.1154e-12\\
\hline
 &6&1.8526e-12& 4.5175e-10&2.8984e-10\\
 \hline
&8&6.2313e-09& 1.5478e-07&1.1568e-07\\
\hline
&10&3.7921e-07&4.1497e-05&1.6493e-05\\
\hline
\end{tabular}
\end{table}

\begin{table}\centering
\caption{\label{Ta:2}  Comparing the true mixed, componentwise perturbation bounds with the first order mixed, componetnwise bounds}
{\small
\begin{tabular}{||cccccccc||}
\hline
$\epsilon$ & $m$ & $\gamma_m$ & $m^\mathrm{T-SYL} \cdot \epsilon$ &$m^{\mathrm{T-SYL},(k)}_\mathrm{SCE}\cdot \epsilon$& $\gamma_c$ & $c^\mathrm{T-SYL}\cdot \epsilon$ & $c^{\mathrm{T-SYL},(k)}_\mathrm{SCE}\cdot \epsilon$\\
\hline
$10^{-8}$& 2 &1.0653e-07&6.7982e-07&7.0811e-08&1.0653e-07&6.9993e-07&7.3014e-08\\
\hline
&4 &2.8197e-04&6.1050e-04&3.1166e-05&3.5893e-04&7.7713e-04&3.9672e-05\\
\hline
&6&1.1605e-03&3.7938e-02&1.7024e-03&1.1605e-03&3.7938e-02&1.7024e-03\\
\hline
&8&3.4915e-01&1.3625e+00&9.0707e-02&3.5217e-01&1.3625e+00&9.0707e-02\\
\hline
&10&1.0052e+00&1.7985e+02&2.0008e+01&1.0912e+00&1.7985e+02&2.0008e+01\\
\hline
$10^{-16}$& & & & & & & \\
\hline
 &2& 1.7901e-17&2.4946e-16&1.9630e-17&5.7210e-16&7.3778e-16&8.5091e-17\\
\hline
& 4 &4.7751e-14&3.0209e-13&2.7450e-14&5.4371e-13&3.4415e-12&3.1275e-13\\
\hline
 &6 &1.5149e-12&2.4979e-11&2.3467e-12&1.7239e-11&2.8425e-10&2.6705e-11\\
 \hline
&8  &5.9587e-09&1.0139e-08&1.1128e-09&6.7807e-08&1.1537e-07&1.2663e-08\\
\hline
&10&  3.3359e-07&2.7142e-06&1.3414e-07&2.0868e-06&1.5021e-05&7.6833e-07\\
\hline
\end{tabular}
}
\end{table}

Algorithms \ref{al:norm} and \ref{algo:subcomp} output the condition matrix which bounds componentwise the true relative error of each entry of  $X$. Let us denote the overestimation matrices
$$
O^{N}=\frac{R_{\rm rel}^{\rt-\mathrm{SYL},(k)}\cdot \epsilon}{\Delta X/X},\quad
O^{C}=\frac{C_{\rm rel}^{\rt-\mathrm{SYL},(k)}\cdot \epsilon}{\Delta X/X},
$$
where $\epsilon$ denotes the perturbation magnitude in \eqref{eq:perturbation gene}, and $R_{\rm rel}^{\rt-\mathrm{SYL},(k)}$ and $C_{\rm rel}^{\rt-\mathrm{SYL},(k)}$ are outputs of Algorithms \ref{al:norm} and \ref{algo:subcomp}, respectively. We test 1000 samples of $(A,B, C)$ and plot the mean matrices $O^{N}$ and $O^{C}$ in Figure~\ref{fig:small}, for $k=3$ and $\epsilon=10^{-8}$. The X-axis of Figure~\ref{fig:small} denotes the index of  $\vect(X)$.  The graphs on the left and right of Figure~\ref{fig:small} display respectively the mean values of the overestimations given by Algorithms~\ref{al:norm} and \ref{algo:subcomp}. Clearly, Algorithm~\ref{algo:subcomp} gives better  estimates.

\begin{figure}
\centering
\includegraphics[width=6.4in]{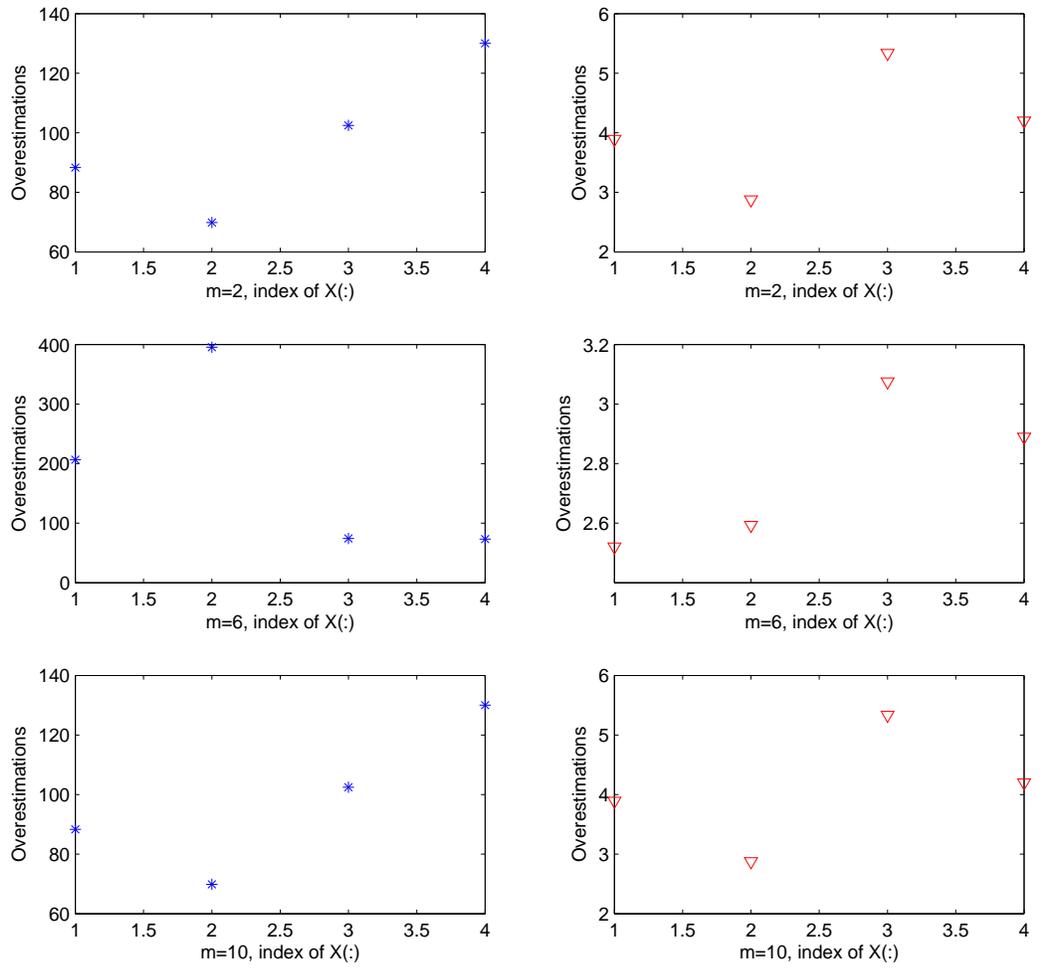}
\caption{Example 2. Overestimation of condition over 1000 samples}
\label{fig:small}
\end{figure}

\end{example}


\begin{example}
  This example came from \cite{ChiangChuLin2012AMC}. Let $\widehat  A,\, \widehat  B \in \R^{n\times n}$ be real lower-triangular matrices with given diagonal elements (denoted by $a,\, b \in \R^n$) and random strictly lower-triangular elements. They are the reshuffled by the orthogonal matrices $Q,\, Z \in \R^{n\times n}$ to form $(A,\, B)=(Q {\widehat  A} Z,\, Q {\widehat  B} Z)$. In \textsc{Matlab} commands, we have
  \begin{align*}
  \widehat A&=tril(randn(n),-1)+diag(a),\, \widehat  B=tril(randn(n),-1)+diag(b),\\
   X&=randn(n,n),
  \end{align*}
  and the right hands $C=AX+X^\top B^\top$.  We generate 1000 samples of $A,\,B$ and $X$ with $n=40$, and for each sample, the perturbations on $A,\, B$ and $C$ are generated as in the previous examples. We display the mean values of $O^{N}$ and $O^{C}$ over 1000 samples in Figure~\ref{fig:ex40}, for $k=3$ and $\epsilon = 10^{-16}$ in \eqref{eq:perturbation gene}. The X-axis of Figure~\ref{fig:ex40} denotes the index of  $\vect(X)$. From Figure~\ref{fig:ex40}, the componentwise condition estimation matrix $C_{\rm rel}^{\rt-\mathrm{SYL},(k)}$ gives reliable perturbation bounds. The mean value of entries of $O^{C}$ is 0.1991 and the variance is 1.9140. On the other hand, the mean value of entries of $O^{N}$ is 72.2192 and the variance is $2.3450\cdot 10^5$. So Algorithm~\ref{algo:subcomp} gives superior condition estimates.

\begin{figure}
\centering
\includegraphics[width=6.4in]{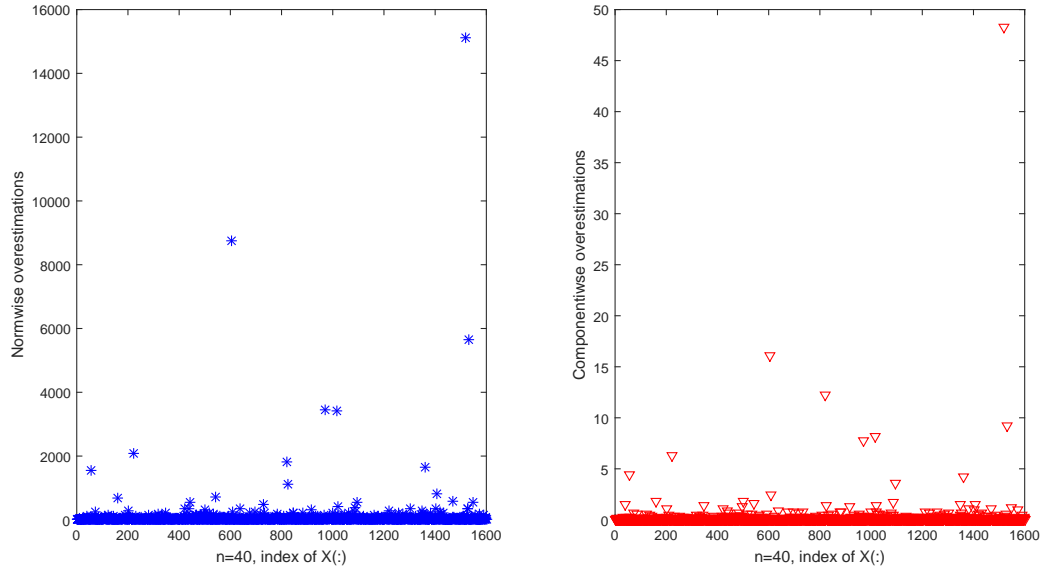}
\caption{Example 3. Overestimation of condition over 1000 samples}
\label{fig:ex40}
\end{figure}

\end{example}

\begin{example}
  In this example, we test the effectiveness of the proposed componentwise backward errors. The triples $A,\, B$ and $C$ are as in Example \ref{ex:small}. The perturbations $\Delta A,\, \Delta B$ and $\Delta C$ are generated as in \eqref{eq:perturbation gene}. Let $Y$ satisfies the perturbed $\top-$Sylvester equation
  \begin{equation*}
(A+\Delta A)Y+Y^\rt (B+\Delta B)^\rt=C+\Delta C,
\end{equation*}
which is solved in Kronecer production form by Gaussian elimination with partial pivoting. Denote
$$
\epsilon^*= \min ~\{~\epsilon : |\Delta A|\leq \epsilon|A|,\,|\Delta B|\leq \epsilon|B|,\,
|\Delta C|\leq \epsilon|C|\}.
$$
We vary the perturbation magnitudes $\epsilon$ in \eqref{eq:perturbation gene} from $10^{-3}$ to $10^{-9}$ and compute $\overline{\mu} (Y)$ in Theorem \ref{t} and  $\eta(Y)$ in \eqref{eq:eta} for different values of $m$. The results are displayed in Table~\ref{Ta:4}. When $m$ increases, the condition of the $\top$-Sylverter equation worsens, as indicated in Example~\ref{ex:small}. For most of cases, $\overline{\mu} (Y)$ has the same order as or one order higher than $\epsilon^*$. For $m=10$, when the perturbations $\epsilon$ are small, $\overline{\mu} (Y)$ seriously overestimates the true componentwise backward error. On the other hand, the normwise backward error $\eta(Y)$ does not estimate $\epsilon^*$ accurately even for well conditioned problem under small perturbations, as for $m=6$ and $\epsilon=10^{-6}$.

\begin{table}\centering
\caption{\label{Ta:4}  Comparing $\epsilon^*$, componentwise and normwise backward errors}
\begin{tabular}{||ccccc||}
\hline
$\epsilon$ & $m$ & $\epsilon^*$ & $\overline{\mu} (Y)$ &$\eta(Y)$\\
\hline
$10^{-3}$& & & &  \\
\hline
& 2&8.9398e-04& 1.4532e-04 & 8.9601e-03 \\
\hline
&4 &9.8239e-04 &7.2812e-04& 4.6979e-01\\
\hline
&6&9.5328e-04&7.0167e-04& 3.1161e+00 \\
\hline
&8&8.8183e-04&3.6163e-04& 1.4204e+00\\
\hline
&10&8.8183e-04&3.6163e-04& 1.4204e+00\\
\hline
$10^{-6}$& & & &  \\
\hline
 &2& 9.2596e-07 & 3.2334e-07 & 4.6975e-06 \\
\hline
& 4 &8.6877e-07 &5.1044e-07& 4.2311e-04\\
\hline
 &6 &9.8269e-07&8.6473e-07& 3.5344e-01\\
 \hline
&8  &9.9547e-07&7.2417e-07& 8.0498e-01\\
\hline
&10& 9.5721e-07& 2.6332e-07& 4.1066e+00\\
\hline
$10^{-9}$& & &  &\\
\hline
 &2& 9.8999e-10&7.9092e-10& 9.4315e-09\\
\hline
& 4 &9.2491e-10&7.4925e-10 &1.0222e-06\\
\hline
 &6 &8.8399e-10 &8.3860e-10& 1.6531e-04\\
 \hline
&8  &8.6229e-10&5.1742e-10& 5.5369e-03\\
\hline
&10& 9.6373e-10&8.5358e-09& 3.3068e-01\\
\hline
$10^{-12}$& & & &\\
\hline
 &2& -9.8837e-13 &8.5948e-13& 2.1765e-11\\
\hline
& 4 &8.6013e-13&4.6452e-13& 3.2712e-09\\
\hline
 &6 &8.1023e-13&4.6365e-12& 1.8620e-07\\
 \hline
&8  &7.7456e-13&1.6637e-09& 3.6200e-06\\
\hline
&10& 9.2831e-13&2.3684e-07& 1.7668e-05\\
\hline
\end{tabular}
\end{table}

\end{example}

\section{Concluding Remarks}

We have considered the condition and errors of $\star$-Sylvester equations under  componentwise perturbations. Backward errors have been defined and the small-sample condition estimation technique has been applied to estimate the condition of $\star$-Sylvester equations. Numerical experiments show our algorithm under componentwise perturbations produces accurate condition and error estimates which reflect true condition and errors accurately. Moreover, the new derived bound for the componentwise backward errors is sharp and reliable according to the numerical experiments for well-conditioned or moderate ill-conditioned problems under large or small perturbations. A possible future research topic is to apply the SCE to other type $\star$-Sylvester equation \cite{ChiangChuLin2012AMC}.

\section*{Acknowledgements}
H. Diao is partially supported by the
National Natural Science Foundation of China under grant 11001045.

\section*{References}


\end{document}